\newcommand{\ben}{\begin{enumerate}}
	\newcommand{\een}{\end{enumerate}}
\newtheorem{thm}{Theorem}
\newtheorem{cor}{Corollary}
\newtheorem{lem}{Lemma}
\newtheorem{dfn}{Definition}
\newtheorem{rmq}{Remark}
\newtheorem{expl}[thm]{Example}
\newtheorem{problem}{Problem}[section]
\newtheorem{theorem}{Theorem}[section]
\newtheorem{lemma}{Lemma}[section]
\newtheorem{proposition}{Proposition}[section]
\newtheorem{remark}{Remark}[section]
\newcommand{\R}{\mathbb{R}}
\newcommand{\N}{\mathbb{N}}
\newcommand{\beq}{\begin{eqnarray}}
\newcommand{\eeq}{\end{eqnarray}}
\newcommand{\bprop}{\begin{pro}}
	\newcommand{\eprop}{\end{pro}}
\newcommand{\blem}{\begin{lem}}
	\newcommand{\elem}{\end{lem}}
\newcommand{\bdfn}{\begin{dfn}}
	\newcommand{\edfn}{\end{dfn}}
\newcommand{\bcor}{\begin{cor}}
	\newcommand{\ecor}{\end{cor}}
\newcommand{\bthm}{\begin{thm}}
	\newcommand{\ethm}{\end{thm}}
\newcommand{\bex}{\begin{expl}}
	\newcommand{\eex}{\end{expl}}
\newcommand{\brmq}{\begin{rmq}}
	\newcommand{\ermq}{\end{rmq}}
\newcommand{\bitem}{\begin{itemize}}
	\newcommand{\eitem}{\end{itemize}}
\newcommand{\bproof}{\begin{proof}}
	\newcommand{\eproof}{\end{proof}}
\begin{document}

	\title{Event-triggering mechanism to damp the linear wave equation }
	\author{
Florent Koudohode$^1$\footnote{e-mail: {\tt fkoudohode@laas.fr}.},	Lucie Baudouin $^2$\footnote{e-mail: {\tt lbaudouin@laas.fr}.},		
	Sophie Tarbouriech $^3$\footnote{e-mail: {\tt tarbour@laas.fr }.}  }
		
	\smallskip
	
	\footnotetext[1]{LAAS-CNRS, Universit\'e de Toulouse, CNRS, Toulouse, France}
	\date{\today}
	\maketitle
	\abstract{
		This paper aims at proposing a sufficient matrix inequality condition to carry out the global exponential stability of the wave equation under an event-triggering mechanism that updates a  
		damping source term. The damping is distributed in the whole space but sampled in time. The well-posedness of the closed-loop event-triggered control system is shown. Furthermore, the avoidance of Zeno behavior is ensured provided that the initial data are more regular. The interest of the results is drawn through some numerical simulations.
	}
	
\textbf{Keywords:} 	Wave equation, Event-triggering mechanism, Global exponential stability, Matrix inequality  
	
\section*{Introduction}
The wave equation arises in fluid dynamics, acoustics and electromagnetics and models the evolution and the propagation of wave's amplitude (water waves, sound waves, seismic waves or light waves). In two space dimensions, it can be a model to study the vibration of a stretched elastic membrane like the skin of a drum and in one space dimension, it is called the vibrating cord or string equation.
The stabilization and analysis of stability of the wave equation are well-studied in the literature.\,The multiplier method used by \cite{chen1979control} and \cite{lions1988controlabilite}, a micro-local analysis approach by \cite{lebeau1996equation} and a backstepping method by \cite{smyshlyaev2010boundary} are used to characterize the stability and prove some controllability and stabilization results of this equation. In the present article, we focus on the stabilization problem, and more especially on the digital implementation of the control law. A natural choice is the periodic update of the control. However an alternative way is to design a triggering strategy, which determines through the occurrence of some events  when the control needs to be updated. It allows thus efficient use of communication and computational resources: it is the event-based control strategy \cite{heemels2012introduction}.

Event-triggered control can be defined as controls updated aperiodically, only when some triggering conditions occur.  Systems with event-based sampling are much harder to analyze than systems with periodic sampling because the time-varying nature of the closed-loop system cannot be avoided. Many difficulties that arise in the context of event-based control are due to the introduction of discontinuities when updating the control. Several works have been developed in this area for finite-dimensional networked control systems: see for instance the seminal works \cite{aaarzen1999simple,aastrom1999comparison} or the most recent ones \cite{heemels2012introduction}, \cite{postoyan2014framework}\,(for nonlinear system), \cite{tanwani2016observer}\,(with observers), \cite{peralez2018event}\,(with high gain approach) and the references therein. As far as infinite dimensional systems are concerned, there exist few designs of event-based control strategies  in the literature. Nevertheless, in  \cite{yao2013resource,selivanov2016distributed,espitia2020event} and \cite{espitia2017stabilization,davo2018stability,baudouin2019event} event-based control strategies were considered for parabolic and hyperbolic Partial Differential Equation (PDE). 

Two approaches are considered in the event-triggered control framework: the emulation problem and the co-design problem. The emulation problem is addressed if only the event-triggering rules are designed (\cite{postoyan2014framework,espitia2017stabilization,espitia2020event}). The joint design of the control law and the event-triggering conditions is referred in the literature as co-design (as for example, in \cite{seuret2016lq,heemels2015periodic}). 
In the current paper we are concerned with the first case, that is the emulation context. More precisely, we focus on proposing an event-triggered mechanism ensuring the exponential stability of the linear wave equation by means of a classical damping term, here sampled in time.  	

A fundamental issue when dealing with event-triggered controllers is to avoid any situation where the mechanism could induce infinitely many updates of the control in a bounded time interval, corresponding to the occurence of a Zeno phenomenon.  
A solution proposed in the literature to avoid Zeno behavior for infinite dimensional systems (see for example, \cite{espitia2016event,espitia2017event,espitia2017stabilization,baudouin2019event,kang2020event} for examples) consists in adding to the event-triggering condition a term exponentially decreasing and depending on the initial condition of the natural energy of the system. 
However, although this additional term allows the avoidance of Zeno behavior, it is restrictive in practice since it depends on the initial energy and a maximal decay rate. 
Actually, in the current paper, we follow another route in order to guarantee the absence of Zeno phenomenon. 
A simpler event-triggering mechanism is considered here, using only the error variable between the value of the state at the last triggering instant and the current one. 
Using an adequate Lyapunov functional, related to the energy of the system, the exponential stability of the closed-loop system under event-triggered control is proven. 
Furthermore, the avoidance of Zeno behavior, is guaranteed by showing the absence of accumulation points in the sequence of time updates. The stability condition proposed is expressed under the form of matrix inequality. The feasibility of such a  inequality is also studied. The main results presented in this paper can be viewed as complementary results to those developed in \cite{terushkin2019sampled,baudouin2019event}.

The paper is organized as follows. Section\,\ref{problemformuation} describes the context and the PDE system that we are concerned with. We present in Section\,\ref{etmstrategy} the main results on the proposed event-triggering mechanism. Hence, the well-posedness of the associated closed-loop system, the exponential stability and the avoidance of the Zeno phenomenon are ensured.  Section \ref{simulation} provides a numerical
example to illustrate the effectiveness of the approach. Conclusions and perspectives are given in Section~\ref{conclusion}. 

\subsection*{Notation.}
The gradient and the Laplacian of the function $z$ are respectively denoted by   $\nabla z=\left(\partial_{x_1} z,\dots \partial_{x_N} z \right)$ 
and $\Delta z=\sum_{i=1}^{N} \partial^2_ {x_i}z$
where $\partial_{x_i} z=\frac{\partial z}{\partial x_i}.$  
Given an open set $\Omega\subset \R^N ,$  $L^2(\Omega)$ is the  Hilbert space of square integrable scalar functions endowed with the norm $\lVert z \rVert=(\int_{\Omega}|z(x)|^2dx)^{\frac 12}$. We also define the Sobolev spaces $H^1_0(\Omega)=\{ z\in L^2(\Omega), \nabla z\in \left(L^2(\Omega)\right)^N, z=0 \text{ on } \partial \Omega \}$, equipped with the norm $\|z\|_{H^1_0(\Omega)}=\|\nabla z\|$ and $H^2(\Omega)=\{ (z, \nabla z)\in \left(L^2(\Omega)\right)^{N+1}, \partial_{x_j}\partial_{x_i} z\in L^2(\Omega) \}$, which is the set of all function such that $ \int_{ \Omega}\left( |z|^2+|\nabla z|^2+|\Delta z|^2  \right)dx$ is finite.
We will often write  $\int_{ \Omega}g(t)$ instead of $\int_{ \Omega}g(x,t)dx$ to ease the reading. 
Finally, a real symmetric positive definite matrix $M$ is denoted $M \succ 0$ and in  a partitioned matrix, the symbol $\star$ stands for symmetric blocks.	

\section{Problem formulation}\label{problemformuation}
Consider a damped wave equation
\begin{equation}
\label{wave}
\left\{
\begin{array}{ll}
\partial_t^2 z(x,t)-\Delta z(x,t)=-\alpha \partial_t z(x,t),  & \forall (x,t)\in \Omega\times\R^+,\\
z(x,t)=0, & \forall (x,t)\in \partial \Omega\times\R^+,\\
z(x,0)=z_0(x),\quad \partial_t z(x,0)=z_1(x),  &\forall  x\in \Omega,
\end{array}
\right.
\end{equation} where $\alpha>0$ is the damping coefficient and $\Omega$ is an open bounded domain in $\R^N$, with smooth boundary $\partial \Omega.$  

We are interested by an event-triggered implementation of the control term $-\alpha \partial_t z$, so that the control signal applied to the plant is updated only at certain instant $\{t_k\}_{k\in\N},$ defined by a mechanism. We assume that the control action is held constant between two successive events. Moreover, differently from classical periodic sampling techniques, the inter-sampling time $t_{k+1}-t_k$ is not assumed to be constant.
The closed-loop system can then be described as follows:
\begin{equation}
\label{waveclosedloop}
\left\{
\begin{array}{ll}
\partial_t^2 z-\Delta z=-\alpha\partial_t z(t_k),   & \hbox{in } \Omega\times [t_k,t_{k+1}), k\in \N\\
z=0, &  \hbox{on } \partial \Omega\times\R^+,\\
z(\cdot,0)=z_0,\partial_t z(\cdot,0)=z_1, & \hbox{in } \Omega.
\end{array}
\right.
\end{equation}
Note that $t_k, k=0,1,\cdots,$ are the triggering instants that satisfy $$0=t_0<t_1<\cdots<t_k<t_{k+1}< \cdots.$$
Hence, the problem we intend to solve can be summarized as follows.
\begin{problem}\label{prob1}
	Design a triggering condition in order to guarantee:
	\begin{enumerate}
		\item the well-posedness of the closed-loop system \eqref{waveclosedloop},
		\item the exponential stability of the system \eqref{waveclosedloop},
		\item the avoidance of Zeno behavior.	
	\end{enumerate}
\end{problem}

To address Problem \ref{prob1}, as a stepping stone, we exploit and expand the results about the continuous-time version of system \eqref{waveclosedloop}, that is, system \eqref{wave}. 

Indeed, the well-posedness and exponential stability of system \eqref{wave} have been widely studied in the literature. 
For instance, in \cite[Theorem 2.3 and Theorem 3.4]{chen1979control}, \cite{lions1988controlabilite} using the Hille-Yossida's theorem, the authors prove that for any initial conditions 
$(z_0,z_1)\in H^1_0(\Omega)\times L^2(\Omega)$,
there exists a unique weak solution to \eqref{wave} satisfying	 
$$
z\in C^0([0,T];H^1_0(\Omega))\cap C^1([0,T];L^2(\Omega)).
$$
Moreover it is proved, thanks to a multiplier technique, that the system is exponentially stable. More precisely, there exist $C>0$ and $\delta>0$ such that, for any initial condition in $H^1_0(\Omega)\times L^2(\Omega),$ the weak solution $z$ to \eqref{wave} satisfies, for all $t> 0$, 
$$
E(t)\le CE(0)e^{-\delta t},
$$
where the energy $E$  is defined as the sum of the kinetic and potential energies 
\begin{equation} 
\label{energy}
E(t)=\dfrac{1}{2}\left(\lVert \partial_t z(t)\rVert^2+\lVert \nabla z(t)\rVert^2\right).	
\end{equation}


\section{Event-triggering strategy}\label{etmstrategy}
In order to expand the event-triggering strategy developed in the context of finite-dimensional systems, as for example in \cite{tabuada2007event,postoyan2014framework,girard2014dynamic}, let us introduce the following error deviation between the speed at the last triggering instant and the current one, for all $x\in\Omega$ and $t\in[t_k,t_{k+1})$:
\begin{equation}
\label{ek}
e_k(x,t)=\partial_t z(x,t)-\partial_t z(x,t_k).
\end{equation}
From there, we can characterize the event-triggering rule we propose to study as:
\begin{equation}\label{tk}
t_{k+1}\!=\inf \Big\{ t\geq t_k,
\|e_k(t)\|^2> 2\gamma E(t)\Big\},
\end{equation}	where $\gamma>0$ is a design parameter.
The idea consists in measuring the deviation of the damping between the last sampled state and the current one and authorize it to be in a $\gamma$ proportion of the current energy.
In other words, between two triggering instants, $\|e_k(t)\|^2 \leq 2\gamma E(t)$ holds, and soon as this becomes false, an update event is generated. 
\begin{remark}
	The triggering rule \eqref{tk} is a static rule since it is based on the use of the state of the system and does not contain an internal dynamical variable as in \cite{girard2014dynamic,espitia2016event,espitia2017event}. This event-triggering rule is also different from that one considered in  \cite[Definition 2]{espitia2017stabilization}, \cite[Definition 3 ]{ESPITIA2016275}, \cite{kang2020event,baudouin2019event}  in the sense that no term built from the initial condition of the energy is added. 
\end{remark}  
Using \eqref{ek}, the closed-loop system under consideration can be written as follows: 
\begin{equation}
\label{waveclosedlooperror}
\left\{
\begin{array}{ll}
\partial_t^2 z-\Delta z=-\alpha\partial_t z+\alpha e_k,   & \hbox{ in } \Omega\times [t_k,t_{k+1}),\\
z=0, &  \hbox{ on } \partial \Omega\times\R^+,\\
z(\cdot,0)=z_0, 	\partial_t z(\cdot,0)=z_1, & \hbox{ in } \Omega.
\end{array}
\right.
\end{equation}

In the following we separate the study of the well-posedness of system \eqref{waveclosedlooperror}, from the exponential stability of the closed-loop system and the guarantee of the avoidance of Zeno behavior. 

\subsection{Well-posedness}\label{wellposedness}

Let us begin by defining the maximal time $T$ under which the system \eqref{waveclosedloop} subjected to the event-triggering law \eqref{tk} has a solution:
\begin{equation}
\label{T}
\left\{
\begin{array}{ll}
T=+\infty  &  \text{ if $(t_k)$ is a finite sequence, }\\
T=\displaystyle\limsup_{k\to +\infty} t_k &  \text{ if not}.
\end{array}
\right.
\end{equation}
From there, one can understand that later in the article, the proof of the absence of Zeno behavior will actually be stemming from the proof that $T=+\infty $ since no accumulation point of the sequence $(t_k)_{k\geq 0}$ will be possible.

In this section, leveraging on some regularity of the solutions to the wave equation we prove the following theorem.
\begin{theorem}  \label{thm1} 
	Let $\Omega$ be an open bounded domain of class $C^2$.  For any initial conditions  $(z_0,z_1)\in H^2(\Omega)\cap  H^1_0(\Omega)\times H_0^1(\Omega)$, there exists a unique strong solution to \eqref{waveclosedloop} under the event-triggering mechanism \eqref{tk},  satisfying 
	\begin{equation} \label{class}
	z\in C^0([0,T);H^2(\Omega)\cap H_0^1(\Omega))\cap C^1([0,T);H^1_0(\Omega)).
	\end{equation}	
\end{theorem}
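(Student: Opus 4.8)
The plan is to build the strong solution on each interval $[t_k, t_{k+1})$ separately and then glue the pieces together, exploiting the fact that on each such interval the forcing term $-\alpha \partial_t z(t_k)$ (equivalently, the frozen error formulation) is a fixed function of $x$ alone, so \eqref{waveclosedloop} reduces to a standard inhomogeneous wave equation with a time-independent source. First I would recall the semigroup framework: writing the state as $U = (z, \partial_t z)$ in the energy space $\mathcal{H} = H^1_0(\Omega) \times L^2(\Omega)$, the undamped wave operator generates a $C^0$-semigroup, and the bounded damping perturbation keeps this true. The key regularity input is that for initial data in the more regular space $D(\mathcal{A}) = (H^2(\Omega)\cap H^1_0(\Omega)) \times H^1_0(\Omega)$ with a source term that is itself smooth enough, the solution stays in $D(\mathcal{A})$, giving exactly the class \eqref{class} on a single interval.

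The induction is the structural heart of the argument. Suppose on $[t_{k-1}, t_k)$ we have produced a solution with $z \in C^0([t_{k-1},t_k); H^2(\Omega)\cap H^1_0(\Omega)) \cap C^1([t_{k-1},t_k); H^1_0(\Omega))$. I would first check that the traces at $t_k$, namely $(z(t_k), \partial_t z(t_k))$, again lie in $D(\mathcal{A})$; this requires that the solution extends continuously up to $t=t_k$ in the strong topology, which follows from the strong continuity on the closed interval. Crucially, the new source on $[t_k, t_{k+1})$ is $-\alpha \partial_t z(t_k) \in H^1_0(\Omega)$, and since $\partial_t z(t_k)$ is precisely the second component of the state we just certified lies in $D(\mathcal{A})$, the source has the needed spatial regularity. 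Applying the single-interval existence result with these initial data and this source yields a strong solution on $[t_k, t_{k+1})$ in the desired class, and matching the Cauchy data at $t_k$ guarantees the glued function is $C^1$ in time across the triggering instant.

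The main obstacle, and the reason the statement is phrased with the maximal time $T$ from \eqref{T} rather than with $+\infty$, is that this inductive construction only produces a solution on $[0, T)$ where $T = \limsup_k t_k$: if the triggering times accumulate, the induction never reaches beyond that accumulation point, and one cannot a priori rule this out at the well-posedness stage. I would therefore be careful to state existence and the regularity \eqref{class} only on the half-open interval $[0,T)$, and to note explicitly that the question of whether $T = +\infty$ (absence of Zeno behavior) is deferred to the later analysis. A secondary technical point I would address is the well-definedness of the triggering map itself: one must verify that on each interval the quantity $\|e_k(t)\|^2 - 2\gamma E(t)$ is a continuous function of $t$ that starts negative (indeed $e_k(t_k) = 0$), so that the infimum in \eqref{tk} defining $t_{k+1}$ is meaningful and strictly larger than $t_k$; the $C^1([t_k,t_{k+1}); H^1_0(\Omega))$ regularity we have just established is exactly what makes $t \mapsto \|e_k(t)\|$ and $t \mapsto E(t)$ continuous, closing the loop. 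I would rely on the $C^2$ regularity of $\partial\Omega$ precisely to invoke elliptic regularity and identify $D(\mathcal{A})$ with $(H^2 \cap H^1_0) \times H^1_0$.
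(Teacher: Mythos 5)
Your proposal is correct and follows essentially the same route as the paper: an induction over the sampling intervals, where on each interval the frozen source $-\alpha\,\partial_t z(\cdot,t_k)\in H^1_0(\Omega)$ makes \eqref{waveclosedloop} a standard inhomogeneous wave equation whose strong ($D(\mathcal{A})$-level) well-posedness is invoked, followed by gluing at the triggering instants and restricting the conclusion to $[0,T)$ with Zeno deferred. The only cosmetic difference is that you phrase the single-interval input in semigroup language while the paper cites a ready-made theorem of Chen (itself semigroup-based); your added remarks on the well-definedness of $t_{k+1}$ (the event function starting negative since $e_k(t_k)=0$) and on trace regularity at $t_k$ are sound refinements of the same argument.
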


\begin{proof}    
	\label{proofprop}	We will proceed verbatim as in \cite{baudouin2019event}. First of all, we show by induction the well-posedness on every sampled interval $[t_k,t_{k+1}]$. From the definition \eqref{T} of $T$, this will allow to obtain a unique solution in the class~\eqref{class}.
	
	\begin{itemize}
		\item \textbf{ Initialization.}
		On the first time interval $[0,t_1]$, the control system \eqref{waveclosedloop} reads 	
		\begin{equation}
		\label{waveclosedloop1}
		\left\{
		\begin{array}{ll}
		\partial_t^2 z-\Delta z=-\alpha z_1,   &  \text{ in } \Omega\times [0,t_{1}),\\
		z=0, &   \text{ on  } \partial \Omega\times(0,t_1),\\
		z(\cdot,0)=z_0, \partial_t z(\cdot,0)=z_1, &  \text{ in } \Omega,
		\end{array}
		\right.
		\end{equation}
		that is a simple wave equation with initial data  $(z_0,z_1)\in H^2(\Omega)\cap  H^1_0(\Omega)\times H_0^1(\Omega)$ and source term $f(t,x)=-\alpha z_1(x).$ Since $z_1\in H_0^1(\Omega)$, then $f\in L^1(0,t_1;H_0^1(\Omega)).$
		Thus from  \cite[Theorem 2.2]{chen1979control} it follows that there exists a unique solution satisfying 
		\begin{equation*}
		\label{waveregularity1}
		z\in C([0,t_1];H^2(\Omega)\cap H_0^1(\Omega))\cap C^1([0,t_1];H^1_0(\Omega)).
		\end{equation*}
		
		\item \textbf{Heredity.}
		Let $k\in\mathbb{N} $ be fixed and assume that 	
		\begin{equation*}
		\label{waveregularity2}
		z   \in C([t_k,t_{k+1}];H^2(\Omega)\cap H_0^1(\Omega))\cap C^1([t_k,t_{k+1}];H^1_0(\Omega)).
		\end{equation*}
		Consider now the closed-loop system \eqref{waveclosedloop} over the next time interval $[t_{k+1},t_{k+2}]$:
		\begin{equation*}
		\label{waveclosedloop3}
		\left\{
		\begin{array}{ll}
		\partial_t^2 z-\Delta z=-\alpha z_{2k+3}, & \text{ in } \Omega\times [t_{k+1},t_{k+2}],\\
		z=0, &  \hbox{ on } \partial \Omega\times[t_{k+1},t_{k+2}],\\
		z(\cdot,t_{k+1})=z_{2k+2}, & \text{ in }   \Omega,\\
		\partial_t z(\cdot,t_{k+1})=z_{2k+3}, & \text{ in } \Omega,
		\end{array}
		\right.
		\end{equation*}where we have denoted by $z_{2k+2}$ and $z_{2k+3}$ the position and velocity function values of the wave at $t_{k+1}$ given by the previous system over $[t_k,t_{k+1}]$. This is again a wave equation with source term which belongs to $ L^1([t_{k+1},t_{k+2}];L^2(\Omega))$ since we assumed $z\in C^1([t_{k},t_{k+1}];H^1_0(\Omega))$ and $\partial_t z(t_{k+1})=z_{2k+3}.$ Therefore, applying again \cite[Theorem 2.2]{chen1979control}  we conclude to the existence and the uniqueness of the solution $z$ in the same functional spaces on next time interval $[t_{k+1},t_{k+2}]$.
		
	\end{itemize}		
	By induction, this regularity holds for any $k\in\mathbb{N}$. 
	Therefore, from the extension by continuity at the update instants $t_k$, one can conclude that system \eqref{waveclosedloop}, or equivalently system \eqref{waveclosedlooperror}, has a unique solution in the class \eqref{class}.
	
	The fact that Theorem \ref{thm1} holds means that we solved item 1 of Problem \ref{prob1}. 
\end{proof}
\subsection{Exponential stability}\label{expostab}
In this section we address item 2 of Problem \ref{prob1}, that is, we propose sufficient conditions in order to ensure the exponential stability of system \eqref{waveclosedloop}-\eqref{tk}. The following result can be stated.
\begin{theorem} \label{mainteorem1} 
	
	Given  the damping parameter $\alpha>0$, assume there exist positive scalars  $\varepsilon,\gamma, \lambda_1,\lambda_2,\delta$ such that the following matrix inequality holds: 
	\begin{equation}\label{Phi}
	\Phi:=\begin{pmatrix}
	-\lambda_1+\alpha \varepsilon \delta&\delta\varepsilon&\frac{\alpha\varepsilon}{2}&0\\
	\star  &\phi_{22}&\frac\alpha{2}&0\\
	\star &\star &-\lambda_2& 0  \\
	\star &\star&\star& \phi_{44}
	\end{pmatrix}\prec 0
	\end{equation}
	with 
	\begin{equation*} \label{phi223} 
	\phi_{22}=\varepsilon-\alpha+\delta+\lambda_2 \gamma,\quad
	\phi_{44}=\delta-\varepsilon+\lambda_1C_{\Omega}^2+\lambda_2\gamma,
	\end{equation*}
	and $C_{\Omega}$ the constant in the Poincar\'e inequality (see Lemma \ref{poincare} in Appendix). Then,  for any initial condition $(z_0,z_1)\in H^2(\Omega)\cap  H^1_0(\Omega)\times H_0^1(\Omega),$
	the closed-loop system \eqref{waveclosedloop} or \eqref{waveclosedlooperror} under the event-triggering mechanism \eqref{tk} is exponentially stable with decay rate $\delta.$ In other words, there exists $ K>0$ such that 
	\begin{equation} \label{lmistab1}
	E(t)\le  K E(0)e^{-2\delta t}\quad \forall t>0.
	\end{equation} 
	Furthermore, if the above matrix inequality holds with $\delta=0,$ then the closed-loop system is exponentially stable with a small enough decay rate.	  	
\end{theorem}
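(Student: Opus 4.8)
The plan is to prove exponential stability via a Lyapunov functional built from the natural energy $E(t)$ defined in \eqref{energy}, augmented with a cross term coupling position and velocity. Specifically, I would introduce
\begin{equation*}
V(t)=E(t)+\varepsilon\int_\Omega z\,\partial_t z,
\end{equation*}
where $\varepsilon>0$ is the same design scalar appearing in \eqref{Phi}. The cross term is a standard device in wave-equation damping arguments: by the Poincar\'e inequality (Lemma \ref{poincare}), $\big|\varepsilon\int_\Omega z\,\partial_t z\big|$ is dominated by $E(t)$ for $\varepsilon$ small, so $V$ is equivalent to $E$, i.e. $c_1 E(t)\le V(t)\le c_2 E(t)$ for positive constants. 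Proving $V$ (hence $E$) decays like $e^{-2\delta t}$ then reduces to establishing the differential inequality $\dot V(t)\le -2\delta V(t)$ along trajectories, after which Gr\"onwall and the equivalence of $V$ and $E$ yield \eqref{lmistab1} with an explicit $K=c_2/c_1$.

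The core computation is to differentiate $V$ along the closed-loop dynamics \eqref{waveclosedlooperror}, where $\partial_t^2 z-\Delta z=-\alpha\partial_t z+\alpha e_k$. Differentiating and substituting the PDE, integrating by parts (using $z=0$ on $\partial\Omega$ to kill boundary terms), I expect $\dot V$ to be a quadratic form in the quantities $\|\nabla z\|$, $\|\partial_t z\|$, $\|e_k\|$, and possibly $\|z\|$. To close the argument I would package these into a vector, say $\zeta=(\|\nabla z\|,\ \|\partial_t z\|,\ \|e_k\|,\ \|z\|)^\top$, and aim to write
\begin{equation*}
\dot V(t)+2\delta V(t)\le \zeta^\top\,\Phi\,\zeta,
\end{equation*}
with $\Phi$ precisely the matrix in \eqref{Phi}. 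The Poincar\'e constant $C_\Omega$ enters when bounding $\|z\|^2\le C_\Omega^2\|\nabla z\|^2$, explaining its appearance in $\phi_{44}$; the multipliers $\lambda_1,\lambda_2$ are Lagrange-type weights used to absorb the triggering constraint $\|e_k(t)\|^2\le 2\gamma E(t)=\gamma(\|\partial_t z\|^2+\|\nabla z\|^2)$ valid on each inter-event interval by \eqref{tk}, together with the Poincar\'e bound, via an $S$-procedure. Once the matrix inequality $\Phi\prec 0$ holds, the quadratic form is negative, giving $\dot V\le -2\delta V$, and the claim follows.

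The main obstacle I anticipate is the bookkeeping that turns the raw expression for $\dot V$ into exactly the displayed $\Phi$: one must correctly account for the sign and coefficient of every term produced by the $\alpha e_k$ forcing and the cross term $\varepsilon\int_\Omega z\,\partial_t z$ (whose derivative contributes $\varepsilon\|\partial_t z\|^2+\varepsilon\int_\Omega z\,\partial_t^2 z$, and the latter expands via the PDE into $-\varepsilon\|\nabla z\|^2-\alpha\varepsilon\int_\Omega z\,\partial_t z+\alpha\varepsilon\int_\Omega z\,e_k$). Getting the entries $\alpha\varepsilon\delta$, $\delta\varepsilon$, $\tfrac{\alpha\varepsilon}{2}$ and the diagonal terms $\phi_{22},\phi_{44}$ to match requires carefully choosing how to split indefinite cross terms such as $\int_\Omega z\,e_k$ and $\int_\Omega\partial_t z\,e_k$ using Young's inequality with weights tuned to the $\lambda_i$, and also correctly inserting the $+2\delta V$ offset (which injects the $\delta$-dependent entries and is what allows the final remark about $\delta=0$ yielding stability with a small positive rate by continuity of $\Phi$ in $\delta$). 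I would handle the indefinite terms symmetrically so that the off-diagonal blocks of $\Phi$ are exactly half the coefficients of the corresponding cross products, matching the $\tfrac{\alpha\varepsilon}{2}$ and $\tfrac{\alpha}{2}$ entries; verifying that no term is double-counted is where errors are most likely, so that step deserves the most care.
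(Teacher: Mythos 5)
Your route is the paper's route: a Lyapunov functional built from the energy plus a cross term, an exact computation of $\dot V+2\delta V$ along \eqref{waveclosedlooperror}, and an S-procedure with multipliers $\lambda_1,\lambda_2$ absorbing the Poincar\'e inequality and the inter-event constraint $\|e_k(t)\|^2\le 2\gamma E(t)$. However, there is a genuine gap: your functional omits a term that the paper's functional contains. The paper uses
\begin{equation*}
V(t)=E(t)+\frac{\alpha\varepsilon}{2}\int_{\Omega}|z(t)|^2+\varepsilon\int_{\Omega}z(t)\,\partial_t z(t),
\end{equation*}
and without the middle term you cannot arrive at the matrix $\Phi$ of \eqref{Phi}. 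Indeed, as you note yourself, expanding $\varepsilon\int_\Omega z\,\partial_t^2 z$ via the PDE produces $-\alpha\varepsilon\int_\Omega z\,\partial_t z$; in the paper this is cancelled \emph{exactly} by the time derivative $\alpha\varepsilon\int_\Omega z\,\partial_t z$ of the extra term, and the extra term also injects the contribution $\alpha\varepsilon\delta\|z\|^2$ through the offset $2\delta V$. With your $V$, the quadratic form you obtain has $(z,z)$-entry $-\lambda_1$ instead of $-\lambda_1+\alpha\varepsilon\delta$, and $(z,\partial_t z)$-entry $\varepsilon\delta-\frac{\alpha\varepsilon}{2}$ instead of $\delta\varepsilon$; call the resulting matrix $\Phi'$. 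The difference $\Phi-\Phi'$ has leading $2\times 2$ block $\left(\begin{smallmatrix}\alpha\varepsilon\delta&\frac{\alpha\varepsilon}{2}\\ \frac{\alpha\varepsilon}{2}&0\end{smallmatrix}\right)$ with determinant $-\frac{\alpha^2\varepsilon^2}{4}<0$, hence indefinite: the hypothesis $\Phi\prec 0$ neither implies nor is implied by $\Phi'\prec 0$, so your argument does not close under the stated assumption. No re-splitting of cross terms by Young's inequality can repair this, because the $\alpha\varepsilon\delta\|z\|^2$ term is simply never generated by your $V$.

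A secondary issue is your norm equivalence: you assert $c_1E\le V\le c_2E$ ``for $\varepsilon$ small,'' but $\varepsilon$ is not free — it is the scalar appearing in \eqref{Phi}, and feasibility actually forces $\varepsilon$ to be bounded \emph{below} (negativity of $\phi_{44}$ requires $\varepsilon>\lambda_1C_{\Omega}^2+\lambda_2\gamma+\delta$), while nothing in the LMI guarantees $\varepsilon C_\Omega<1$, which is what your bound needs. Here too the missing term is what makes the paper's argument work with no extra assumption: since $\Phi\prec0$ forces $\phi_{22}<0$, i.e.\ $\varepsilon<\alpha$, the pointwise quadratic form $\frac{\alpha\varepsilon}{2}z^2+\varepsilon z\,\partial_t z+\frac12(\partial_t z)^2$ is positive definite, giving a lower bound $V\ge c_1E$ with $c_1>0$, and the upper bound $V\le C_rE$ follows from Cauchy--Schwarz, Young and Poincar\'e as you describe. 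A last, minor, difference: the paper writes $\dot V+2\delta V$ as an exact identity $\int_\Omega\psi^\top M_1\psi$ for the pointwise vector $\psi=(z,\partial_t z,e_k,\nabla z)^\top$ rather than bounding cross terms of norms; since the relevant off-diagonal entries are nonnegative your norm-based packaging could be made to work, but the pointwise identity avoids all sign discussions. Fix the functional and the rest of your outline goes through essentially as in the paper.
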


\begin{proof}  \label{exponential}	Let be $\varepsilon>0$ and define the following Lyapunov functional candidate:
	\begin{multline*}
	V(t):=\dfrac{1}{2} \int_{\Omega} |\partial_t z(x,t)|^2dx+\dfrac{1}{2}\int_{\Omega} |\nabla z(x,t)|^2dx+\dfrac{\alpha\varepsilon}{2}\int_{\Omega}| z(x,t)|^2dx+\varepsilon\int_{\Omega}z(x,t)\partial_t z(x,t)dx.
	\end{multline*}
	To see the relationship between $V(t)$ and $E(t)$ note first that $E(t)\le V(t)$ and then by Cauchy-Schwarz, Young and Poincar\'e's inequalities (see Lemmas \ref{cauchy} and \ref{poincare} in Appendix), it follows:
	\begin{align*}
	V(t)&\le E(t)+\dfrac{\alpha\varepsilon}{2}\lVert z(t)\rVert^2+\varepsilon \lVert z(t)\rVert \lVert\partial_t z(t)\rVert\\
	&\le E(t)+\dfrac{\alpha\varepsilon}{2}\lVert z(t)\rVert^2+\dfrac{\varepsilon}{2}\left(C_\Omega  \lVert\partial_t z(t)\rVert^2  +\dfrac{1}{C_\Omega}\lVert z(t)\rVert^2\right)\\	
	&\le E(t)+\dfrac{\varepsilon}{2} C_\Omega  \lVert\partial_t z(t)\rVert^2 +\dfrac{C_\Omega^2 \varepsilon}{2}\left(\alpha+\dfrac{1}{C_\Omega}\right)\lVert \nabla z(t)\rVert^2,  
	\end{align*}
	or equivalently
	$$V(t)	\le \left( 1+\varepsilon C_\Omega+\varepsilon\alpha C_\Omega^2  \right) E(t)=C_r E(t).	
	$$
	Hence we have \begin{equation}\label{equivalenceofv}
	E(t)\le V(t)\le C_r E(t).
	\end{equation}	
	Moreover, we want to ensure that: 
	$$
	\dot{V}(t)+2\delta V(t)\le 0, \qquad \forall t\ge 0	.
	$$
	Thus let us start by computing the time-derivative of $V$ along the trajectories of \eqref{waveclosedloop}:	
	\begin{align} \label{vprime}
	\dot{V}(t)=&~ \dot{E}(t)+\alpha\varepsilon\int_{\Omega} z(t)\partial_t z(t)
	+\varepsilon\int_{\Omega}|\partial_t z(t)|^2+\varepsilon\int_{\Omega}z(t)\partial_t^2 z(t).
	\end{align}
	From now on, the dependence of the intervals in terms of the mute variable $x$ is ghosted in order to ease the reading.
	Let us recall that from \eqref{waveclosedlooperror} one has for all $(x,t)$ in $\Omega\times \R_+$:
	\begin{equation} \label{eqwave}
	\partial_t^2 z(x,t)=\Delta z(x,t)-\alpha\partial_t z(x,t)+\alpha e_k(x,t).
	\end{equation} 
	In addition to this, one needs to express the time-derivative of the energy $\dot{E}(t).$
	Then one gets
	\begin{equation*}\label{en1}
	\dot{E}(t)=\displaystyle\int_{\Omega}\partial_t^2 z(t)\partial_t z(t)+\displaystyle\int_{\Omega}\nabla\partial_t z(t) \nabla  z(t),
	\end{equation*}
	which gives by using \eqref{eqwave} and the Green formula:
	\begin{align}\label{energiereducted}
	\dot{E}(t)&=-\alpha\displaystyle\int_{\Omega}|\partial_t z(t)|^2+\alpha\displaystyle\int_{\Omega}e_k(t)\partial_t z(t).
	\end{align}
	Gathering \eqref{eqwave}, \eqref{energiereducted} 	and  \eqref{vprime} we obtain:	 
	\begin{align*} 
	\dot{V}(t)+2\delta V(t) =~&\alpha\varepsilon\delta\int_{\Omega}|z(t)|^2
	+(\delta-\varepsilon)\int_{\Omega} |\nabla z(t)|^2  +\alpha\varepsilon\int_{\Omega} z(t)e_k(t)
	+\alpha\int_{\Omega} \partial_t z(t)e_k(t)\\
	&+(\varepsilon-\alpha+\delta )\int_{\Omega} |\partial_tz(t)|^2
	+2\delta\varepsilon\int_{\Omega}z(t)\partial_t z(t),
	\end{align*}
	so that 
	\begin{equation}\label{VVV}
	\dot{V}(t)+2\delta V(t) =\displaystyle\int_{\Omega} \psi^{\top}(x,t)M_1\psi(x,t) dx,
	\end{equation}
	with $\psi=\begin{pmatrix}z&\partial_t z &e_k& \nabla z \end{pmatrix}^{\top}$ and a symmetric matrix \\
	$$M_1=\begin{pmatrix}
	\alpha \varepsilon \delta&\delta \varepsilon&\frac{\alpha\varepsilon}{2}&0\\
	\star  &\varepsilon-\alpha+\delta &\frac\alpha{2}&0\\
	\star &\star& 0 & 0\\
	\star &\star&\star&\delta-\varepsilon
	\end{pmatrix}.$$
	Actually we want to satisfy $\dot{V}(t)+2\delta V(t)\le 0$ or equivalently $	\int_{\Omega} \psi^{\top}(t)M_1\psi(t) \le 0$ subject to some constraints. 
	
	The first constraint comes from the	Poincar\'e's inequality $\|z(t) \|^2\le C_{\Omega}^2 \|\nabla z(t)\|^2$ (see Lemma \ref{poincare} in Appendix) and it is equivalent to  
	$$\displaystyle\int_{\Omega} \psi^{\top}(t)M_2\psi(t) \ge 0, \hbox{ with } M_2=diag (-1,0,0, C_{\Omega}^2).$$
	The second constraint comes from the event-triggering law that imposes $\|e_k(t)\|^2 \le 2\gamma E(t),\, \forall t\in [t_k,t_{k+1}),$ i.e., while no triggering event occurs. This last inequality can be written $\|e_k(t)\|^2 \le \gamma \left( \|\partial_t z(t)\|^2+\|\nabla z(t)\|^2 \right)$ or equivalently 
	$$\displaystyle\int_{\Omega}\psi(t)^{\top}M_3\psi(t) \ge 0 , \hbox{ with } M_3=diag (0,\gamma,-1,\gamma).$$
	Using the S-procedure \cite[Section 2.6.3]{boyd1994linear}, one therefore wants to satisfy the following condition:
	\begin{align}\label{sprocedure}
	\dot{V}(t)+2\delta V(t)+\lambda_1\int_{\Omega} \psi(t)^{\top} M_2 \psi(t)+\lambda_2\int_{\Omega} \psi(t)^{\top} M_3 \psi(t)\le 0	
	\end{align} 
	for any  two positive scalars $\lambda_1$ and $\lambda_2$.\\
	Since one has \eqref{VVV} then it reads:	
	\begin{equation}\label{sproreduit}
	\int_{\Omega} \psi^{\top}(x,t)(M_1+\lambda_1M_2+\lambda_2M_3)\psi(x,t) dx\le 0,
	\end{equation} 	Hence, by defining $\Phi$ as $\Phi=M_1+\lambda_1M_2+\lambda_2M_3$, the satisfaction of relation \eqref{Phi} means that relation \eqref{sproreduit} and \eqref{sprocedure} are also satisfied, and therefore one obtains $$\dot{V}(t)+2\delta V(t)\le 0,\quad \forall t\in \R^+.$$ 
	That corresponds	 to have 
	$ V(t)\le e^{-2\delta t}V(0).$ By taking \eqref{equivalenceofv} into account, it follows that $E(t)\le  C_rE(0) e^{-2\delta t}.$ The proof of Theorem\,\ref{mainteorem1} is complete. 
\end{proof}
Before studying the avoidance of Zeno behavior in the next section, let us provide some insights regarding the matrix inequality \eqref{Phi}. First, we can use a change of variable $\bar{\gamma}=\lambda_2\gamma$ and search both $\lambda_2$ and $\bar{\gamma}$ as decision variables of $\Phi\prec 0$ or more precisely of:
\begin{equation}\label{Phi1}
\Phi:=\begin{pmatrix}
-\lambda_1+\alpha \varepsilon \delta&\delta\varepsilon&\frac{\alpha\varepsilon}{2}&0\\
\star  &\varepsilon-\alpha+\delta+\bar{\gamma}&\frac\alpha{2}&0\\
\star &\star&  -\lambda_2 & 0\\
\star &\star&\star&\phi_{44}
\end{pmatrix}\prec 0
\end{equation} with $$\phi_{44}=\delta-\varepsilon+\lambda_1 C_{\Omega}^2+\bar{\gamma}.$$
The negativity of $\Phi$ is a sufficient condition allowing to ensure the exponential stability of the closed loop. In the following proposition we show that there always exists a solution $(\lambda_1,\lambda_2,\bar{\gamma},\delta)$ such that \eqref{Phi1} is satisfied.

\begin{proposition}\label{prop}
	Given $\alpha>0,$ condition \eqref{Phi} of Theorem\,\ref{mainteorem1} or equivalently condition \eqref{Phi1} enjoys the following properties
	\begin{enumerate}
		\item[(i)] Given $\delta=0,$ condition \eqref{Phi1} is always feasible;
		\item[(ii)] There always exists a positive scalar $\delta\neq 0$ such that \eqref{Phi1} is feasible.
	\end{enumerate}	
\end{proposition}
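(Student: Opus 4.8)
The plan rests on a structural observation about \eqref{Phi1}: its fourth row and column vanish off the diagonal, so $\Phi$ is block diagonal, $\Phi=\mathrm{diag}(A,\phi_{44})$, where
\[
A=\begin{pmatrix}
-\lambda_1+\alpha\varepsilon\delta & \delta\varepsilon & \tfrac{\alpha\varepsilon}{2}\\
\delta\varepsilon & \varepsilon-\alpha+\delta+\bar{\gamma} & \tfrac{\alpha}{2}\\
\tfrac{\alpha\varepsilon}{2} & \tfrac{\alpha}{2} & -\lambda_2
\end{pmatrix}.
\]
Consequently $\Phi\prec0$ holds if and only if $A\prec0$ and $\phi_{44}<0$, which decouples the problem into a $3\times3$ matrix inequality and a single scalar inequality, both depending affinely (hence continuously) on $\delta$.

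For (i) I set $\delta=0$ and exhibit an explicit feasible tuple. The entry $\phi_{44}=-\varepsilon+\lambda_1C_{\Omega}^2+\bar{\gamma}$ is made negative by taking $\bar{\gamma}$ and $\lambda_1$ small relative to $\varepsilon$ (for instance $\bar{\gamma}=\varepsilon/4$ and $\lambda_1<\varepsilon/(4C_{\Omega}^2)$), while the middle diagonal entry of $A$, namely $\varepsilon-\alpha+\bar{\gamma}$, is made negative by choosing $\varepsilon$ small enough that $\varepsilon+\bar{\gamma}<\alpha$. For $A\prec0$ I use the Schur complement with respect to its $(3,3)$ entry $-\lambda_2<0$: at $\delta=0$ the complement equals $\mathrm{diag}(-\lambda_1,\varepsilon-\alpha+\bar{\gamma})+\tfrac1{\lambda_2}vv^{\top}$ for the fixed vector $v=(\tfrac{\alpha\varepsilon}{2},\tfrac{\alpha}{2})^{\top}$, a rank-one perturbation that vanishes as $\lambda_2\to+\infty$. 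Since the limit $\mathrm{diag}(-\lambda_1,\varepsilon-\alpha+\bar{\gamma})$ is negative definite, the complement stays negative definite for all $\lambda_2$ large enough, giving $A\prec0$ and hence feasibility of \eqref{Phi1} at $\delta=0$.

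For (ii) I invoke openness of strict definiteness. Keeping the tuple $(\varepsilon,\bar{\gamma},\lambda_1,\lambda_2)$ fixed from (i), the symmetric matrix $\Phi(\delta)$ depends affinely on $\delta$, so its largest eigenvalue $\lambda_{\max}(\Phi(\delta))$ is continuous in $\delta$. Since $\lambda_{\max}(\Phi(0))<0$ by part (i), there is $\delta^{*}>0$ with $\lambda_{\max}(\Phi(\delta))<0$ for all $\delta\in[0,\delta^{*})$; any $\delta\in(0,\delta^{*})$ then furnishes a positive decay rate for which \eqref{Phi1} is feasible.

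The only genuine care lies in the explicit selection in (i): the parameter $\bar{\gamma}$ occurs simultaneously in the $(2,2)$ entry of $A$ and in $\phi_{44}$, so one must verify that a single choice renders both negative, which the chain $\lambda_1 C_{\Omega}^2+\bar{\gamma}<\varepsilon<\alpha-\bar{\gamma}$ secures (feasible as soon as $\lambda_1 C_{\Omega}^2+2\bar{\gamma}<\alpha$). The Schur-complement limit then disposes of $\lambda_2$ and the off-diagonal couplings, and everything remaining is routine continuity.
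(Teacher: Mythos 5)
Your proof is correct, and its skeleton matches the paper's own: both decouple the scalar condition $\phi_{44}<0$ from the $3\times 3$ block, impose the same chain $\lambda_1 C_{\Omega}^2+\bar{\gamma}<\varepsilon<\alpha-\bar{\gamma}$ on the diagonal entries, take a Schur complement with respect to the $(3,3)$ entry $-\lambda_2$, and deduce (ii) from (i) by an affine perturbation in $\delta$ (the paper via the decomposition \eqref{phiwritten}, you via continuity of $\lambda_{\max}$ --- the same argument in different clothing). The one genuine difference is how the existence of a suitable $\lambda_2$ is extracted from the Schur condition \eqref{shur}. The paper invokes the elimination lemma to assert that \eqref{shur} is solvable in $\lambda_2$ \emph{if and only if} $\mathrm{diag}(\lambda_1,\alpha-\varepsilon-\bar{\gamma})$ is positive definite on the orthogonal complement of $(\varepsilon,1)^{\top}$; this step is somewhat delicate, because the multiplier $\alpha^2/(4\lambda_2)$ is constrained to be positive, whereas the elimination lemma as usually stated produces a multiplier of unconstrained sign. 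Your replacement --- the rank-one term $\tfrac{1}{\lambda_2}vv^{\top}$ vanishes as $\lambda_2\to+\infty$, and strict negative definiteness is an open condition --- needs no named lemma, is purely one-sided (sufficiency is all that feasibility requires), and even yields an explicit admissible threshold, e.g.\ $\lambda_2>\lVert v\rVert^2/\min\{\lambda_1,\,\alpha-\varepsilon-\bar{\gamma}\}$. In this respect your argument is more elementary and more self-contained than the paper's; the only thing the elimination-lemma route offers beyond yours is a claimed exact characterization of solvability in $\lambda_2$, which the proposition never uses.
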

\begin{proof}
	Let us denote by $M_0$ the matrix corresponding to $-\Phi$ in the case $\delta= 0$:
	\begin{equation*}
	M_0:=\begin{pmatrix}
	\lambda_1&0&-\frac{\alpha\varepsilon}{2}&0\\
	0  &-\varepsilon+\alpha-\bar{\gamma}&-\frac\alpha{2}&0\\
	-\frac{\alpha\varepsilon}{2} &-\frac\alpha{2}&\lambda_2&0\\
	0 &0 & 0 &  \varepsilon-\lambda_1 C_{\Omega}^2-\bar{\gamma}\\
	\end{pmatrix}.
	\end{equation*}	
	Condition \eqref{Phi1} reads $M_0\succ 0,$ which corresponds to:
	\begin{align}
	\label{eq1}	\lambda_1>0,\lambda_2>0\\
	\label{eq2}	\alpha-\varepsilon-\bar{\gamma}>0&\Longleftrightarrow \varepsilon<\alpha-\bar{\gamma}\\
	\label{eq3}	\varepsilon-\lambda_1 C_{\Omega}^2-\bar{\gamma}>0&\Longleftrightarrow \varepsilon>\lambda_1 C_{\Omega}^2+\bar{\gamma}
	\end{align}
	and by using the Schur complement \cite{boyd1994linear} on the first $3\times 3$ block we get:
	\begin{equation}
	\label{shur}
	\begin{pmatrix}
	\lambda_1&0\\
	0  &-\varepsilon+\alpha-\bar{\gamma}
	\end{pmatrix}-\dfrac{\alpha^2}{4\lambda_2}\begin{pmatrix}
	\varepsilon\\1
	\end{pmatrix}\begin{pmatrix}
	\varepsilon &1
	\end{pmatrix}>0
	\end{equation}	
	By appling the elimination lemma to \eqref{shur} one gets that it is equivalent to 
	\begin{equation*}
	\begin{pmatrix}
	1&-\varepsilon
	\end{pmatrix}\begin{pmatrix}
	\lambda_1&0\\
	0  &-\varepsilon+\alpha-\bar{\gamma}
	\end{pmatrix}\begin{pmatrix}
	1\\-\varepsilon
	\end{pmatrix}>0
	\end{equation*} which is always satisfied provided that $\lambda_1,\varepsilon$ and $\bar{\gamma}$ satisfy \eqref{eq1} and \eqref{eq2}. Therefore one can conclude that if one can choose $\lambda_1,\varepsilon,\bar{\gamma}$ satisfying \eqref{eq1}, \eqref{eq2} and \eqref{eq3} there always exists $\lambda_2>0$ such that \eqref{shur} holds. In summary there exists a solution  such that  $M_0\succ 0$ holds.  
	Furthermore, the tuning parameter $\gamma$ is easily recovered from $\bar{\gamma}$ and~$\lambda_2$.
	
	Consider now $\delta\neq 0,$ then one can write $\Phi$ as follows:
	\begin{equation}\label{phiwritten}
	\Phi=-M_0+\delta \begin{pmatrix}
	\alpha\varepsilon& \varepsilon&0& 0\\
	\varepsilon  & 1 &0& 0\\
	0 &0& 1 & 0\\
	0 &0& 0 & 0\\
	\end{pmatrix}. 
	\end{equation}
	Since there exist $\varepsilon,\bar{\gamma},\lambda_1,\lambda_2$ such that $-M_0\prec 0,$ it follows that there always exists $\delta>0$ such that $\Phi\prec 0.$\\
	The proof of Proposition \ref{prop} is complete. 
\end{proof}

\subsection{Avoidance of Zeno behavior}\label{avoidancezeno}
In this section, we address the third item of Problem \ref{prob1}, namely we prove that we avoid Zeno behavior.

Before proving that this phenomenon cannot occur, let us show that the natural energy \eqref{energy} of the closep-loop system has a useful property stated as follows.

\begin{lemma}
	\label{lemmaenergy}
	Under the event-triggering mechanism \eqref{tk} there exists a constant $C>0$ such that for all $t\in [0,T)$ :  
	\begin{equation}
	\label{energy11}
	E(0)e^{-2Ct}\le E(t)\le E(0)e^{2Ct}.
	\end{equation} 
\end{lemma}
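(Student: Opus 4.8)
The goal is to establish a two-sided exponential bound on the energy $E(t)$, which amounts to controlling the growth and decay rate of $E$ through its time derivative. The plan is to bound $|\dot{E}(t)|$ by a constant multiple of $E(t)$ itself, and then integrate the resulting differential inequality. Crucially, the expression for $\dot{E}(t)$ was already computed in the proof of Theorem \ref{mainteorem1}, namely
\begin{equation*}
\dot{E}(t)=-\alpha\int_{\Omega}|\partial_t z(t)|^2+\alpha\int_{\Omega}e_k(t)\partial_t z(t),
\end{equation*}
valid on each interval $[t_k,t_{k+1})$ where the system is governed by \eqref{waveclosedlooperror}. So the first step is simply to recall this identity.

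Next I would estimate each term. The first term is manifestly bounded by $2\alpha E(t)$ in absolute value since $\int_{\Omega}|\partial_t z(t)|^2\le 2E(t)$. For the cross term, I would apply Cauchy--Schwarz to get $|\int_{\Omega}e_k(t)\partial_t z(t)|\le \|e_k(t)\|\,\|\partial_t z(t)\|$, and then invoke the event-triggering condition \eqref{tk}, which guarantees $\|e_k(t)\|^2\le 2\gamma E(t)$ for all $t\in[t_k,t_{k+1})$. Combining with $\|\partial_t z(t)\|^2\le 2E(t)$ yields $|\int_{\Omega}e_k(t)\partial_t z(t)|\le 2\sqrt{\gamma}\,E(t)$. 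Therefore $|\dot{E}(t)|\le \big(2\alpha+2\alpha\sqrt{\gamma}\big)E(t)$ on each sampling interval, so one may take a constant such as $C=\alpha(1+\sqrt{\gamma})$ (or any convenient upper bound) so that $|\dot{E}(t)|\le 2CE(t)$.

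From the differential inequality $-2CE(t)\le \dot{E}(t)\le 2CE(t)$, I would integrate on $[0,t]$. On each sampling interval the solution is smooth by Theorem \ref{thm1}, and $E$ is continuous across the triggering instants (the velocity $\partial_t z$ is continuous there, since the sampled control merely changes the source term, not the state), so the inequality $\frac{d}{dt}\log E(t)\in[-2C,2C]$ integrates to $E(0)e^{-2Ct}\le E(t)\le E(0)e^{2Ct}$ on all of $[0,T)$, giving \eqref{energy11}.

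The main subtlety to handle carefully is the validity of the estimate at the triggering instants $t_k$ and the continuity of $E$ across them; one must check that the triggering condition $\|e_k(t)\|^2\le 2\gamma E(t)$ indeed holds on the whole closed-open interval $[t_k,t_{k+1})$ by definition of \eqref{tk}, and that the piecewise bound on $\dot{E}$ can be glued into a global bound via continuity of $E$. This is routine but is the step where the event-triggering structure genuinely enters, so it deserves an explicit remark rather than being swept under the rug.
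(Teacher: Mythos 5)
Your proposal is correct and follows essentially the same route as the paper: recall $\dot{E}(t)=-\alpha\int_{\Omega}|\partial_t z(t)|^2+\alpha\int_{\Omega}e_k(t)\partial_t z(t)$, bound it via Cauchy--Schwarz and the triggering condition $\|e_k(t)\|^2\le 2\gamma E(t)$ to get $|\dot{E}(t)|\le 2CE(t)$ with the same constant $C=\alpha(1+\sqrt{\gamma})$, then integrate. The only cosmetic difference is the final step: the paper applies Gronwall's lemma on each $[t_k,t]$ and chains the estimates by induction from $t_0=0$, whereas you glue the piecewise bound via continuity of $E$ and integrate $\log E$ directly (which tacitly assumes $E>0$, a case Gronwall handles without comment), but this is the same argument in substance.
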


\begin{proof}
	Let us recall that the time-derivative of $E(t)$ used in the proof of Theorem \ref{mainteorem1} satisfies \eqref{energiereducted} so that one gets 
	\begin{align}
	\label{estimateEpoint} |\dot{E}(t)|\le & \alpha \|\partial_t z(t)\|^2+\alpha \|e_k(t)\| \|\partial_t z(t)\|
	\end{align}
	From the definition \eqref{T} of $T$, since $t\in[0,T)$, either there exists $k$ such that $ t\in [t_k,t_{k+1}) $ if the sequence $(t_k)_{k\geq 0}$ is not finite, or $t$ may be greater than the last $t_k$ and the definition of  \eqref{tk} allows to call $t_{k+1} = T$.\\
	Using the event-triggering law \eqref{tk} one gets for $t\in [t_k,t_{k+1}):$
	$$
	\|e_k(t)\|^2\le 2\gamma E(t)
	$$
	and since $\|\partial_t z(t)\|^2\le 2E(t)$ we get :
	\begin{align}\label{eprime}
	\nonumber	|\dot{E}(t)|&\le 2\alpha E(t)+\alpha\sqrt{2\gamma E(t)}\sqrt{2E(t)}\\
	\nonumber|\dot{E}(t)|	&\le 2 \alpha E(t)+2\alpha\sqrt{\gamma} E(t)  \text{ or equivalently  }\\
	|\dot{E}(t)|	&\le 2C E(t) \text{ with  $C=\alpha (1+\sqrt{\gamma})$}.	
	\end{align}
	It follows that $-2CE(t)\le \dot{E}(t)\le 2CE(t)$.\\  
	Gronwall's Lemma applied on $[t_k,t]$ (Lemma \ref{gronwall} in Appendix) to both inequalities gives 
	\begin{equation}\label{estimeenergy}	
	E(t_k)e^{-2C(t-t_k)}\le E(t)\le E(t_k)e^{2C(t-t_k)}. 
	\end{equation}	
	Then taking $t=t_{k+1},$ it becomes : 
	\begin{equation*}\label{Eentkmoins}
	E(t_k)e^{-2C(t_{k+1}-t_k)}\le E(t_{k+1})\le E(t_k)e^{2C(t_{k+1}-t_k)}.
	\end{equation*}
	Inferring what it gives for $E(t_k)$,  one can deduce
	\begin{equation*} \label{Eentkmoins1}
	E(t_{k-1})e^{-2C(t_{k+1}-t_{k-1})}\le E(t_{k+1})\le E(t_{k-1})e^{2C(t_{k+1}-t_{k-1})}
	\end{equation*}
	and since $t_0=0,$ by induction we get:
	\begin{align*}
	E(0)e^{-2Ct_{k+1}}&\le E(t_{k+1})\le E(0)e^{2Ct_{k+1}}. 
	\end{align*}
	Then inequality \eqref{estimeenergy} yields:
	\begin{align*}
	E(0)e^{-2Ct_k}e^{-2C(t-t_k)}\le E(t)\le E(0)e^{2Ct_k}e^{2C(t-t_k)} ,
	\end{align*} showing that \eqref{energy11} holds for all $t\in [0,T)$.{\hfill $\Box$ } 
\end{proof}

We can now state the following result about the avoidance of Zeno behavior.
The idea is to consider the maximal time $T$ under which we proved that the system \eqref{waveclosedloop} subjected to the event-triggered law \eqref{tk} has a solution. 
From the definition of $T$ in \eqref{T}, one can verify that
if $T<+\infty,$ then $T$ is an accumulation point of the sequence $(t_k)_{k\ge 0}$ and a Zeno phenomenon occurs. Thus, avoiding Zeno phenomenon is a consequence of proving that $T=+\infty.$
\begin{theorem} \label{zeno}
	There is no Zeno Phenomenon for the system \eqref{waveclosedlooperror} under the event-triggering mechanism \eqref{tk}. 
	Equivalently, the maximal time defined by \eqref{T} is actually $T=+\infty.$
\end{theorem}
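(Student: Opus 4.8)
The plan is to argue by contradiction: assuming $T<+\infty$, the sequence $(t_k)$ is infinite and increasing to $T$, so it suffices to reach a contradiction by showing that the inter-event times $\tau_k:=t_{k+1}-t_k$ satisfy $\sum_k \tau_k=+\infty$. If $E(0)=0$ the solution is identically zero, $e_k\equiv 0$, no event is ever triggered and $T=+\infty$ by \eqref{T}; so I may assume $E(0)>0$. Then Lemma \ref{lemmaenergy}, applied on the bounded interval $[0,T)$, yields two constants $0<E_{\min}\le E(t)\le E_{\max}<+\infty$ valid for all $t\in[0,T)$. The two ingredients I need are this uniform two-sided control of the energy together with a bound on how fast the error $e_k$ can grow after a reset.

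To control the growth of $e_k$, I would differentiate the equation in time and exploit a higher-order energy. Setting $w=\partial_t z$, on each interval $[t_k,t_{k+1})$ the source term $-\alpha\partial_t z(t_k)$ is constant, so $w$ solves the free wave equation $\partial_t^2 w-\Delta w=0$ with $w=0$ on $\partial\Omega$. Consequently the higher-order energy $\tilde E(t):=\tfrac12\|\partial_t^2 z(t)\|^2+\tfrac12\|\nabla\partial_t z(t)\|^2$ is constant on each $[t_k,t_{k+1})$, equal to a value $\tilde E_k$; it is finite at $t=0$ because $(z_0,z_1)\in (H^2\cap H^1_0)\times H^1_0$ gives $\partial_t^2 z(0)=\Delta z_0-\alpha z_1\in L^2$. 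Across a triggering instant $\nabla\partial_t z$ stays continuous (as does $\Delta z$, since $z\in C^0(H^2)$) while $\partial_t^2 z$ jumps by exactly $-\alpha e_k(t_{k+1})$; viewing $(\partial_t^2 z,\nabla\partial_t z)$ as an element of $L^2\times (L^2)^N$ and using the triangle inequality together with $\|e_k(t_{k+1})\|^2=2\gamma E(t_{k+1})\le 2\gamma E_{\max}$, I get $\sqrt{\tilde E_{k+1}}\le \sqrt{\tilde E_k}+\alpha\sqrt{\gamma E_{\max}}$, hence the linear bound $\sqrt{\tilde E_k}\le \sqrt{\tilde E_0}+k\,\alpha\sqrt{\gamma E_{\max}}$.

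With this in hand I would estimate $\tau_k$ from below. Since $e_k(t_k)=0$ and $\partial_t e_k=\partial_t^2 z$, integration gives $\|e_k(t)\|\le\int_{t_k}^t\|\partial_t^2 z(s)\|\,ds\le \sqrt{2\tilde E_k}\,(t-t_k)$ on $[t_k,t_{k+1})$. Evaluating at the triggering time, where $\|e_k(t_{k+1})\|^2=2\gamma E(t_{k+1})\ge 2\gamma E_{\min}$, produces $\tau_k\ge \sqrt{\gamma E_{\min}/\tilde E_k}\ge c/(\sqrt{\tilde E_0}+k\alpha\sqrt{\gamma E_{\max}})$ for a constant $c>0$ independent of $k$. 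The lower bound decays only like $1/k$, so $\sum_k\tau_k$ diverges like a harmonic series, which forces $T=t_0+\sum_k\tau_k=+\infty$ and contradicts $T<+\infty$.

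The main obstacle I anticipate is the rigorous justification of the higher-order energy identity: one must check that the strong solution of Theorem \ref{thm1} is regular enough for $w=\partial_t z$ to be a finite-energy solution of the free wave equation to which energy conservation (via Green's formula) legitimately applies, and that the jump of $\partial_t^2 z$ at $t_{k+1}$ is precisely $-\alpha e_k(t_{k+1})$ with no contribution from $\nabla\partial_t z$. The conceptual point that makes the argument work, and which I would highlight, is that one does \emph{not} need a uniform positive lower bound on the $\tau_k$ — the higher energy $\tilde E_k$ is allowed to grow — it is enough that $\sqrt{\tilde E_k}$ grows at most linearly in $k$, so that the resulting lower bounds on $\tau_k$ remain not summable.
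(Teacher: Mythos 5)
Your proof is correct, but it takes a genuinely different route from the paper's. The paper studies the ratio $\varphi(t)=\|e_k(t)\|^2/(2\gamma E(t))$, which increases from $0$ at $t_k$ to $1$ at $t_{k+1}$, bounds $\dot{\varphi}$ uniformly on $[0,T)$ using the lower bound of Lemma \ref{lemmaenergy} together with a bound $\|\Delta z(t)\|\le C_1$ on all of $[0,T)$, and integrates to obtain a \emph{uniform} positive lower bound on $t_{k+1}-t_k$, contradicting $t_{k+1}-t_k\to 0$ when $T<+\infty$. You instead obtain only $t_{k+1}-t_k\ge c/\left(\sqrt{\tilde E_0}+k\alpha\sqrt{\gamma E_{\max}}\right)$, via exact conservation of the higher-order energy of $w=\partial_t z$ on each sampling interval and control of its jumps at events by the triggering rule, and conclude by divergence of the harmonic series. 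Your route buys rigor at a delicate point: the paper's constant $C_1$ amounts to asserting $\Delta z\in L^\infty(0,T;L^2(\Omega))$, which does not follow from continuity alone on the half-open interval $[0,T)$ when switches may accumulate at $T$; your linear-in-$k$ growth bound on $\sqrt{\tilde E_k}$ is precisely the quantitative substitute for that assertion, and it shows that uniform boundedness is in fact not needed. You also dispose of the degenerate case $E(0)=0$, for which $\varphi$ is undefined. In exchange, the paper's route avoids higher-order energies entirely and yields a uniform (though $T$-dependent) dwell time rather than a $1/k$ one. The obstacle you flag is real but standard given Theorem \ref{thm1}: since $\partial_t^2 z=\Delta z-\alpha\partial_t z(\cdot,t_k)\in C^0([t_k,t_{k+1}];L^2(\Omega))$, the function $w$ belongs to $C^0(H^1_0(\Omega))\cap C^1(L^2(\Omega))$ and solves the free wave equation distributionally, hence coincides with the unique, energy-conserving weak solution; and the equality $\|e_k(t_{k+1})\|^2=2\gamma E(t_{k+1})$ at triggering instants, which both proofs use, follows from continuity of $t\mapsto\|e_k(t)\|^2-2\gamma E(t)$.
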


\begin{proof} By taking inspiration from the  reasoning as in \cite{tabuada2007event,girard2014dynamic} the proof is based on the study of the function $\varphi$  defined on $[t_k,t_{k+1})$ by 
	\begin{equation}
	\label{phi}
	\varphi: t\mapsto \varphi(t)=\frac{ \|e_k(t)\|^2}{2 \gamma E(t) }.
	\end{equation}	
	Let us estimate the time-derivative of $\varphi:$
	\begin{equation} \label{phiprime}
	\dot{\varphi}(t)=\frac{\displaystyle\int_{\Omega}\dot{e}_k(t)e_k(t) }{ \gamma E(t)}-\frac{\dot{E}(t)\|e_k(t) \|^2 }{ 2\gamma\left(  E(t)\right)^2 }.
	\end{equation}
	We have from \eqref{ek}, \eqref{waveclosedlooperror} and the Cauchy Schwartz's inequality, $\forall t\in [t_k,t_{k+1});$ 
	\begin{align*}
	\int_{\Omega}&\dot{e}_k(t)e_k(t)\\
	&=\int_{\Omega}\Delta z(t)e_k(t)-\alpha\int_{\Omega}\partial_tz(t)e_k(t)+\alpha\|e_k\|^2,\\
	&\le \|e_k(t)\|\|\Delta z(t)\|+\alpha \|e_k(t)\|\|\partial_t z(t)\|+\alpha \|e_k(t)\|^2.
	\end{align*}
	Since for any $(z_0,z_1)\in H^2(\Omega)\cap  H^1_0(\Omega)\times H_0^1(\Omega)$, the closed-loop system \eqref{waveclosedlooperror} with  \eqref{tk} has a unique solution satisfying 
	$z\in C^0([0,T);H^2(\Omega)\cap H_0^1(\Omega))$, then there exists a constant $C_1>0$ such that $\forall t\in [0,T)$
	\begin{equation}
	\label{placien}
	\|\Delta z(t)\|\le	\|\Delta z(t)\|_{L^{\infty}(0,T;L^2(\Omega))}\le C_1,
	\end{equation}
	where $C_1$ depends on $\|z_0\|_{H^2(\Omega)}$ and $\|z_1\|_{H^1_0(\Omega)}$. \\
	Then  using $\|\partial_tz(t)\|^2\le 2 E(t)$ and \eqref{tk} it follows: 
	\begin{align*}
	\dfrac{\displaystyle\int_{\Omega}\dot{e}_k(t)e_k(t)}{\gamma E(t)}
	&\le \frac{\|e_k(t)\|\|\Delta z(t)\|}{\gamma E(t)}+\dfrac{\alpha \|e_k(t)\|\|\partial_t z(t)\|}{\gamma E(t)} +\alpha\frac{ \|e_k(t)\|^2}{ \gamma E(t) }\\
	&\le \frac{C_1\sqrt{2\gamma E(t)}}{\gamma E(t)}+\dfrac{\alpha\sqrt{2\gamma E(t)}\sqrt{2 E(t)}}{\gamma E(t)} +\alpha\varphi(t),
	\end{align*}	
	which leads to: 
	\begin{align}\label{term3}
	\dfrac{\displaystyle\int_{\Omega}\dot{e}_k(t)e_k(t)}{\gamma E(t)}	\le \frac{C_1\sqrt{2}}{\sqrt{\gamma E(t)}}+\dfrac{2\alpha}{\sqrt{\gamma}} +\alpha\varphi(t).
	\end{align}
	Using \eqref{eprime} we get:\begin{equation}
	\label{term1}
	\dfrac{	-\dot{E}(t)\|e_k(t) \|^2 }{2\gamma \left(E(t)\right)^2}\le \alpha(1+\sqrt{\gamma})\varphi(t).
	\end{equation}
	Gathering the terms \eqref{term3} and \eqref{term1}  we have:
	$$
		\dot{\varphi}(t)\le  \frac{C_1\sqrt{2}}{\sqrt{\gamma E(t)}}+\dfrac{2\alpha}{\sqrt{\gamma}}  +\alpha(2+\sqrt{\gamma})\varphi(t)
		\le \frac{C_1\sqrt{2}}{\sqrt{\gamma E(t)}}+\dfrac{2\alpha}{\sqrt{\gamma}} +\alpha (2 +\sqrt{\gamma}).
	$$
	Recall that from the event-triggering law \eqref{tk}, an event occurs if $\varphi(t)> 1$, and as long as $\varphi(t)\le 1$, no update event is trigerred.  Hence it follows:
	\begin{equation}\label{phiprim}
	\dot{\varphi}(t)		\le A+\frac{B}{\sqrt{E(t)}}
	\end{equation}
	with $A=\dfrac{2\alpha}{\sqrt{\gamma}} +\alpha(2+\sqrt{\gamma})$ and $B=C_1\sqrt{\dfrac{2}{\gamma}}.$
	
	Using Lemma \ref{lemmaenergy}  one gets 
	$$\forall t\in [0,T),  E(t)\ge E(0)e^{-2Ct}\ge E(0)e^{-2CT},$$ and \eqref{phiprim} becomes $\dot{\varphi}(t)\le  A+\frac{Be^{CT}}{\sqrt{E(0)}} .$ Then $\forall k\in \N,$  integrating on $[t_k,t_{k+1}]$ knowing that $\varphi(t_k)=0$ and $\varphi(t_{k+1})=1$ we obtain : \begin{equation}
	\label{tkk}
	1\le \left[A+\frac{Be^{CT}}{\sqrt{E(0)}}\right](	t_{k+1}-t_k).
	\end{equation}
	Let $\displaystyle t_k\to T$ as $k\to +\infty$ in \eqref{tkk}, then we get a contradiction if $T\neq +\infty$ and then we need to consider $T = +\infty$. That leads to the absence of any accumulation points. 
	Therefore, the avoidance of Zeno behavior is guaranteed. 
\end{proof}
%
\subsection{Main result}
The main result to solve Problem \ref{prob1} is obtained by combining Theorems \ref{thm1}, \ref{mainteorem1} and \ref{zeno}. This is summarized below.
\begin{theorem}
	Let $\alpha>0$ be a damping parameter.  For any initial conditions  $(z_0,z_1)\in H^2(\Omega)\cap  H^1_0(\Omega)\times H_0^1(\Omega)$, 
	system \eqref{waveclosedloop} under the event-triggering rule \eqref{tk} satisfies the following properties:
	\begin{enumerate}
		\item There exists a unique strong solution satisfying \eqref{class}.
		
		\item The closed-loop exponential stability is ensured provided that the relation \eqref{Phi} is feasible.
		
		\item The absence of accumulation points on the sequence of triggering instants ensures the avoidance of Zeno behavior.
		
	\end{enumerate} 
\end{theorem}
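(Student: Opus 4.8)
The plan is to assemble the three preceding results, paying close attention to the logical order so that no circular dependence creeps in. First I would invoke Theorem~\ref{thm1}: for any $(z_0,z_1)\in H^2(\Omega)\cap H^1_0(\Omega)\times H^1_0(\Omega)$ it produces a unique strong solution in the class \eqref{class} on the maximal interval $[0,T)$, where $T$ is the time defined in \eqref{T}. This immediately settles item~1, with the caveat that at this stage $T$ could \emph{a priori} be finite; the whole point is therefore to upgrade $[0,T)$ to $[0,+\infty)$.

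Next I would treat the Zeno question, because it is independent of the stability analysis. Theorem~\ref{zeno} shows precisely that $T=+\infty$: its proof relies only on Lemma~\ref{lemmaenergy} (the two-sided exponential energy bounds obtained directly from the triggering law \eqref{tk} and Gronwall, with no reference to the matrix inequality \eqref{Phi}) together with the $H^2$-regularity of the solution supplied by Theorem~\ref{thm1}, which yields the uniform bound $\|\Delta z(t)\|\le C_1$ used to control $\dot\varphi$. Hence the sequence $(t_k)$ has no accumulation point, which settles item~3 and, crucially, makes the solution global.

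Finally, with $T=+\infty$ in hand, I would apply Theorem~\ref{mainteorem1}. Under feasibility of \eqref{Phi} (equivalently of \eqref{Phi1}, whose solvability is guaranteed in full generality by Proposition~\ref{prop}), the Lyapunov functional $V$ satisfies $\dot V+2\delta V\le 0$ along the trajectory, and the norm equivalence \eqref{equivalenceofv} converts this into $E(t)\le K E(0)e^{-2\delta t}$. Because the solution now exists for all $t>0$, this estimate holds on the entire half-line rather than merely on a bounded interval, which settles item~2.

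The only genuinely delicate point is the interlocking of the three steps. One must check that Theorem~\ref{zeno} does not secretly invoke the stability conclusion of Theorem~\ref{mainteorem1}—it does not, since Lemma~\ref{lemmaenergy} is derived purely from the triggering rule—and that the decay estimate of Theorem~\ref{mainteorem1} is valid on whatever interval the solution is defined, so that its extension to $[0,+\infty)$ is automatic once $T=+\infty$. With that ordering, namely well-posedness on $[0,T)$, then absence of Zeno forcing $T=+\infty$, then exponential decay on the now-global solution, the three items follow with no further computation.
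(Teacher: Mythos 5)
Your proposal is correct and follows essentially the same route as the paper, which obtains this summary theorem precisely by combining Theorems~\ref{thm1}, \ref{mainteorem1} and \ref{zeno}. Your additional care in ordering the steps (well-posedness on $[0,T)$, then Zeno avoidance to force $T=+\infty$, then the decay estimate on the now-global solution) and in checking that Theorem~\ref{zeno} depends only on Lemma~\ref{lemmaenergy} and the $H^2$-regularity, not on \eqref{Phi}, makes explicit what the paper leaves implicit, and is accurate.
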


\section{Numerical simulation}\label{simulation}
We illustrate the efficiency of the event-triggering law proposed in this paper by considering the example of a one dimentional wave equation on $\Omega=(0,\pi).$

We compare the behavior of the continuous-time version of the closed-loop system versus the event-triggered closed-loop version. In other words we compare the behavior of system \eqref{wave} with the one of system \eqref{waveclosedloop}  under the event-triggering rule \eqref{tk}. To do this, let us consider the initial conditions
\begin{equation}\label{initial}
z_0(x)=\sin \left(x\right) \text{ and } z_1(x)=\sin \left(2 x\right).
\end{equation}
and the damping coefficient $\alpha=1.$

Figure\,\ref{continuous} depicts the numerical solution of the closed-loop system \eqref{wave}.

\begin{figure}[h!]
$$	\includegraphics[scale=0.45]{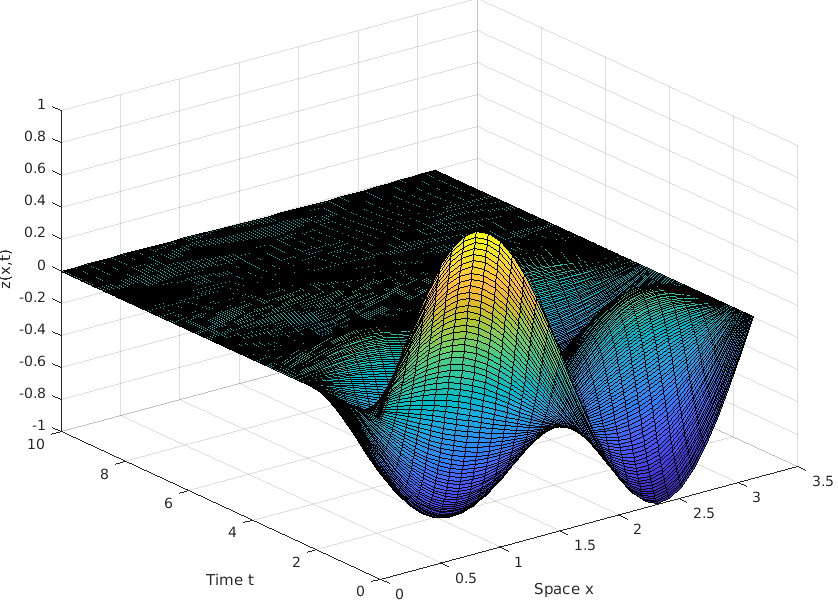} $$
	\caption{ Numerical solution to the closed-loop system \eqref{wave} under the continuous time controller with the damping coefficient $\alpha=1$ for the initial condition \eqref{initial}.}
	\label{continuous}
\end{figure} 

The design parameter $\gamma$ in the event-triggering rule \eqref{tk} plays a key role in the exponential stability of system~\eqref{waveclosedloop} under the mechanism \eqref{tk}.
The choice of  $\gamma$ influences the number of updates imposed by \eqref{tk}: the smaller $\gamma$, the more frequent the updates.
A feasible solution to condition \eqref{Phi} in Theorem \ref{mainteorem1} 
is:  $\lambda_1=0.1, \lambda_2=1, \gamma=0.02, \delta=0.25$ and $\varepsilon=0.8$.
As depicted on Figure~\ref{event-triggered}, the solution to system \eqref{waveclosedloop}-\eqref{tk} converges quickly to the origin. 
\begin{figure}[h!]
$$	\includegraphics[scale=0.50]{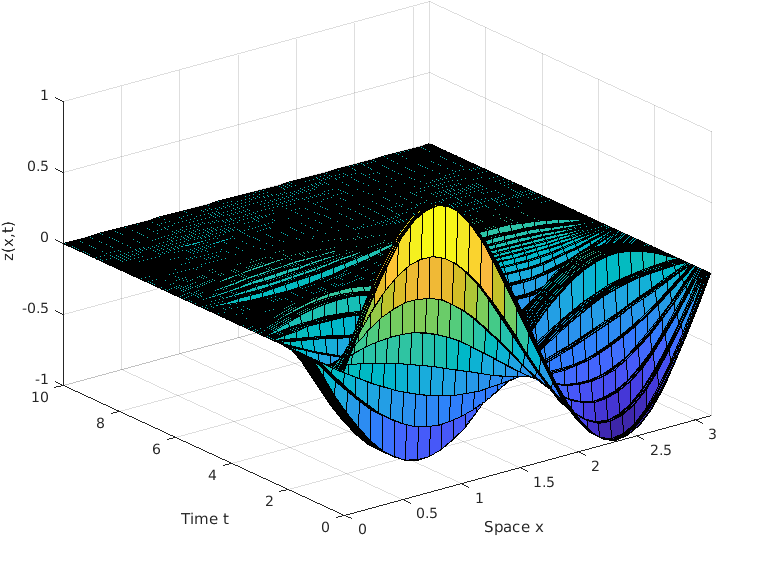} $$
	\caption{ Numerical solution to the closed-loop system \eqref{waveclosedloop} under the event-triggering mechanism \eqref{tk} with the damping coefficient $\alpha=1$ for the initial condition \eqref{initial}.}
	\label{event-triggered}	
\end{figure} 

Now in other to better understand how the sampling acts on the exponential stability result, we present in Figure~\ref{energies}, the evolution of the natural energy 
$$E(t)=\dfrac{1}{2} \displaystyle\int_{0}^\pi |\partial_t z(x,t)|^2dx+\dfrac{1}{2} \displaystyle\int_{0}^\pi |\partial_x z(x,t)|^2dx$$ of the closed-loop system \eqref{waveclosedloop} in the following cases (recall that one considers $\alpha=1$):
\begin{itemize}
	\item Under the continuous controller (blue line)  
	\item Under the event-triggered controller (black dotted line) with $t_k$ given by the event-triggering rule \eqref{tk}.
	\item Under the fixed controller 	$f(x,t)=-\partial_t z(x,t_0)=- z_1(x)$ (green line).  	
	\item  With the controller $f(x,t)=-\partial_t z(x,kp)$ (red dashed line) under periodic sampling with period 
	$\tau=\frac{T}{N_{\text{up}}}$ where $N_{\text{up}}$ is the number of update observed during the time $T$ when following \eqref{tk}. 
\end{itemize}
\begin{center}
	\begin{figure}[htb!]
	$$	\includegraphics[scale=0.7]{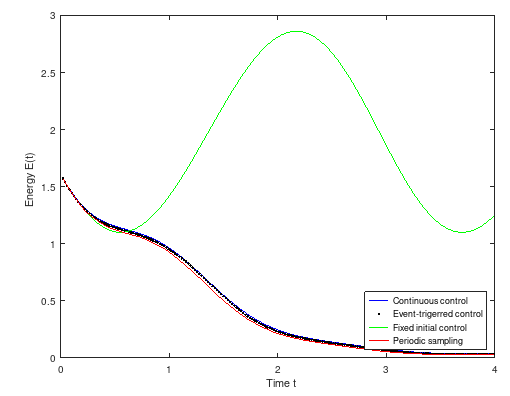} $$
		\caption{ Evolution of the energy $E(t)$ given by \eqref{energy}}
		\label{energies}	
	\end{figure}
\end{center}

First, we remark that when the controller is fixed as $f(t)=- z_1,$ the energy evolves as the sinusoidal $z_1.$ This corresponds to the first instant of sampling where no event occurred before. 
Second, Figure\,\ref{energies} shows that the evolution of energy of the event-triggered control system is similar to the one of the continuous-time controlled system and to the one under ad-hoc periodic sampling. Nevertheless, to the best of our knowledge a proof of the exponential decay of the energy in the case of periodic sampling does not exist. But as it can be seen, a good choice of the period $\tau$ leads to the exponential decay of the corresponding energy. More precisely, using trial and error method, one can find that this system becomes unstable when $\tau>\frac{T}{N_{\text{up}}}.$

Finally, Figure \ref{control} depicts the evolution of the magnitude of the controller $\|f(t)\|_{L^2(0,\pi)}$ in the continuous-time and the event-triggering frameworks. We notice that the update times are not regular and there is a large variation in the magnitude of the continuous-time controller allowing to conclude that the event-triggered control approach is energy efficient.

\begin{center}
	
	\begin{figure}[htb!]
	$$	\includegraphics[scale=0.45]{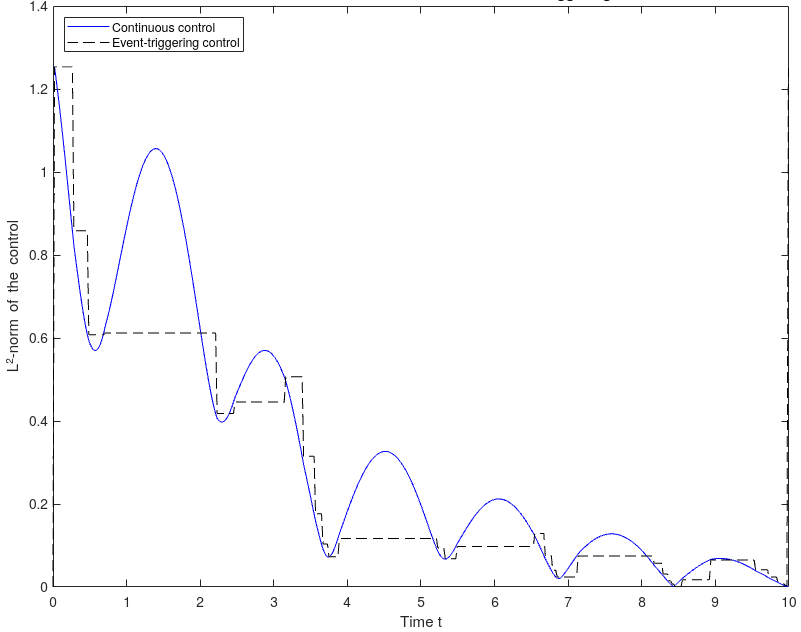} $$
		\caption{ Evolution of the $L^2$-norm of the continuous-time controller  in blue and the event-triggered controller  (in black dashed line) with damping coefficient $\alpha=1.$ }
		\label{control}
	\end{figure} 
\end{center}

\section{Conclusion}\label{conclusion}
In the present paper, the exponential stabilization of the damped linear wave equation under an event-triggering mechanism is guaranteed. A sufficient matrix inequality-based condition for the
exponential stability of the system is formulated. The avoidance of the Zeno behavior through the absence of accumulation points in the updates sequence was proved. 

This work open the door for future investigations. In particular, it would be relevant to study other classes of PDEs, for example those appearing in vibration control theory as the beam equation \cite{crepeau2006control}. Moreover, the system could also be subjected to input nonlinearity \cite{prieur2016wave} and this is also another interesting direction of future works.

\textbf{ Acknowledgement}	The authors would like to thank  Sylvain Ervedoza for interesting and fruitful discussions on the proof of the avoidance of Zeno phenomenon.
	

\appendix
\section{Useful Lemmas}
\begin{lemma}[\textnormal{Cauchy-Schwarz's inequality \cite{evans1998partial}}]\label{cauchy}
	For any  $u,v\in L^2(\Omega)$ it holds
	$$	\int_{\Omega} u(x)v(x)dx\le \lVert u\rVert_{L^2(\Omega)}\lVert v\rVert_{L^2(\Omega)}.$$
	
\end{lemma}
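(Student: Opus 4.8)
The plan is to prove this by the classical second-degree-polynomial (discriminant) argument, viewing $L^2(\Omega)$ as an inner product space with $\int_{\Omega}u(x)v(x)\,dx$ playing the role of the inner product of $u$ and $v$. Before the main computation I would first check that the left-hand side is a well-defined finite quantity: the pointwise bound $|u(x)v(x)|\le\tfrac12\bigl(|u(x)|^2+|v(x)|^2\bigr)$ together with $u,v\in L^2(\Omega)$ shows that $uv\in L^1(\Omega)$, so $\int_{\Omega}uv$ is finite.

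Next I would dispose of the degenerate case $\lVert v\rVert=0$ separately, since the discriminant step below implicitly divides by $\lVert v\rVert^2$. If $\lVert v\rVert=0$, then $v=0$ almost everywhere, whence $\int_{\Omega}uv=0=\lVert u\rVert\,\lVert v\rVert$ and the inequality holds trivially. I may therefore assume $\lVert v\rVert>0$ from here on.

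For the main step, I would introduce for $t\in\R$ the nonnegative quantity
$$p(t):=\int_{\Omega}\bigl(u(x)+t\,v(x)\bigr)^2\,dx\ge 0,$$
which expands to the quadratic $p(t)=\lVert v\rVert^2\,t^2+2t\int_{\Omega}uv+\lVert u\rVert^2$. Since $\lVert v\rVert^2>0$ and $p(t)\ge 0$ for every real $t$, this quadratic has at most one real root, so its discriminant cannot be positive, i.e.
$$\Bigl(\int_{\Omega}uv\Bigr)^2\le \lVert u\rVert^2\,\lVert v\rVert^2.$$
Taking square roots gives $\bigl|\int_{\Omega}uv\bigr|\le\lVert u\rVert\,\lVert v\rVert$, and in particular $\int_{\Omega}uv\le\lVert u\rVert\,\lVert v\rVert$, which is exactly the stated inequality. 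An equally clean alternative would be to normalize to $\lVert u\rVert=\lVert v\rVert=1$ and apply Young's inequality $uv\le\tfrac12(u^2+v^2)$ under the integral sign, but the discriminant route avoids treating the normalization as a separate reduction.

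There is no genuine obstacle here, as the result is entirely classical; the only points deserving a line of care are the integrability of the product $uv$ and the separate treatment of the degenerate case $\lVert v\rVert=0$, which the polynomial argument otherwise silently assumes away.
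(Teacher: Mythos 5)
Your proof is correct and complete. Note, however, that the paper itself offers no proof of this lemma: it is stated in the appendix purely as a classical tool, with a citation to Evans' textbook, so there is no argument to compare yours against. Your discriminant route is the standard textbook proof, and you handle exactly the two points that a careless write-up would skip: the integrability of $uv$ (via $|uv|\le\tfrac12(u^2+v^2)$, so the left-hand side is a well-defined real number rather than merely bounded in absolute value) and the degenerate case $\lVert v\rVert=0$, where the quadratic-in-$t$ argument would otherwise collapse since $p$ is no longer genuinely quadratic. One small remark: for the real-valued $L^2$ functions used throughout the paper, $p(t)=\lVert u+tv\rVert^2\ge 0$ is immediate, but if one wanted the complex case the argument needs the usual modification with $\overline{v}$ and a choice of phase; that is irrelevant here since the state $z$ and the error $e_k$ are real-valued.
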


\begin{lemma}
	[\textnormal{Poincar\'e's inequality \cite{evans1998partial}}] \label{poincare}
	Let $\Omega$ be a bounded, connected, open subset of  $~\R^N ,$ of class $C^1 $. There exists a constant $C_\Omega$, depending only on $N$ and  on the diameter of the domain $\Omega,$ such that for each function $z\in H_0^1(\Omega)$,
	$$
	\left\| z\right\|_{L^2(\Omega)}\le C_\Omega\|	\nabla  z\rVert_{L^2(\Omega)}.
	$$
\end{lemma}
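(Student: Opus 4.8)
The plan is to establish the inequality first for smooth, compactly supported functions and then extend it to all of $H_0^1(\Omega)$ by density. Since $C_c^\infty(\Omega)$ is dense in $H_0^1(\Omega)$ by the very definition of the latter space, it suffices to prove a bound $\|z\|_{L^2(\Omega)} \le C_\Omega \|\nabla z\|_{L^2(\Omega)}$ for $z \in C_c^\infty(\Omega)$ with a constant $C_\Omega$ independent of $z$; the inequality then passes to the limit because both sides are continuous with respect to the $H_0^1$ norm.

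First I would extend $z$ by zero outside $\Omega$, which produces a function in $C_c^\infty(\R^N)$ since $z$ has compact support inside $\Omega$. Because $\Omega$ is bounded, it is contained in a slab $\{x \in \R^N : a < x_1 < b\}$ with $b - a \le \mathrm{diam}(\Omega)$, after choosing a coordinate axis. Writing $x = (x_1, x')$ with $x' = (x_2, \dots, x_N)$ and using that the extended function vanishes for $x_1 \le a$, the fundamental theorem of calculus gives
$$z(x_1, x') = \int_a^{x_1} \partial_{x_1} z(s, x') \, ds.$$
The one-dimensional Cauchy--Schwarz inequality (Lemma \ref{cauchy}) then yields
$$|z(x_1, x')|^2 \le (b-a) \int_a^b |\partial_{x_1} z(s, x')|^2 \, ds,$$
and integrating successively in $x_1$ over $(a,b)$ and in $x'$ over $\R^{N-1}$ produces
$$\|z\|_{L^2(\Omega)}^2 \le (b-a)^2 \|\partial_{x_1} z\|_{L^2(\Omega)}^2 \le (b-a)^2 \|\nabla z\|_{L^2(\Omega)}^2.$$
Thus the claim holds with $C_\Omega = b-a$, a constant depending only on $N$ (through the choice of slab) and on the diameter of $\Omega$, exactly as asserted.

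This argument is essentially elementary, so I do not expect a genuine obstacle; the only point requiring care is the density step, where one must verify that approximating $z \in H_0^1(\Omega)$ by functions $z_n \in C_c^\infty(\Omega)$ in the $H_0^1$ norm forces both $\|z_n\|_{L^2(\Omega)}$ and $\|\nabla z_n\|_{L^2(\Omega)}$ to converge to the corresponding quantities for $z$, so that the uniform bound survives the limit. I would emphasize that the $C^1$ regularity of $\partial\Omega$ is not actually needed for this homogeneous-boundary version and merely fixes the geometric setting; the essential ingredient is the boundedness of $\Omega$, which guarantees that it fits inside a finite slab and hence that the constant can be taken proportional to the diameter.
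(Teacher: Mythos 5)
Your proof is correct, but note that there is nothing in the paper to compare it against: the lemma appears in the appendix as a recalled standard result, stated with a citation to \cite{evans1998partial} and no proof at all. Your slab argument --- extend by zero, confine $\Omega$ to $\{a<x_1<b\}$ with $b-a\le \mathrm{diam}(\Omega)$, apply the fundamental theorem of calculus in the $x_1$ direction, Cauchy--Schwarz, and integrate --- is the classical elementary proof, and it buys something the citation does not: a fully self-contained argument with the explicit constant $C_\Omega\le \mathrm{diam}(\Omega)$, which is exactly the dependence the lemma asserts (indeed your constant does not even depend on $N$, and connectedness is never used). By contrast, the proof in Evans for the general Poincar\'e--Wirtinger inequality proceeds by compactness (Rellich--Kondrachov) and yields no explicit constant, so for the purposes of this paper --- where $C_\Omega$ enters the matrix inequality \eqref{Phi} quantitatively --- your route is arguably the more useful one.

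One point of care. You claim that density of $C_c^\infty(\Omega)$ in $H_0^1(\Omega)$ holds ``by the very definition'' and that the $C^1$ regularity of $\partial\Omega$ is dispensable. That is accurate under the standard definition of $H_0^1(\Omega)$ as the closure of $C_c^\infty(\Omega)$ in the $H^1$ norm. But the paper's notation section defines $H_0^1(\Omega)$ as the set of $z\in L^2(\Omega)$ with $\nabla z\in \left(L^2(\Omega)\right)^N$ and $z=0$ on $\partial\Omega$, i.e.\ by a trace-zero condition. With that definition, the identification of the space with the closure of $C_c^\infty(\Omega)$ is itself a theorem, and its proof does use some boundary regularity (Lipschitz suffices, so $C^1$ is enough). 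So under the paper's conventions the $C^1$ hypothesis is not entirely idle: it is precisely what licenses your density step, and you should either invoke that identification explicitly or adopt the closure definition from the outset. With that caveat addressed, the argument is complete.
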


\begin{lemma}
	[\textnormal{Gronwall's Inequality \cite{ames1997inequalities}}]\label{gronwall}
	Let $u$ be a real-valued continuous function defined on an interval of the form $[a,\infty)$ or $[a,b]$ or $[a,b)$ with $a<b$. If $u$ is differentiable in $]a,b [$ and satisfies the differential inequality 
	$$ \dot{u}(t)\le \beta(t)u(t),\quad  \forall t\in  ]a,b [$$
	then 
	$$u(t)\le u(a)\exp \left(\int_{a}^{t}\beta(s)ds\right),\quad \forall t\in [a,b]. $$
\end{lemma}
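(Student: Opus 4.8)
The plan is to use the classical integrating-factor argument. First I would introduce the positive weight $\mu(t)=\exp\left(-\int_a^t\beta(s)\,ds\right)$, which is well defined as soon as $\beta$ is (locally) integrable, and which satisfies $\mu(t)>0$ and $\dot\mu(t)=-\beta(t)\mu(t)$ on $]a,b[$ by the fundamental theorem of calculus. I then set $v(t)=u(t)\mu(t)$. Since $u$ is continuous on the given interval and differentiable on $]a,b[$, while $\mu$ is everywhere differentiable, the product $v$ is continuous on $[a,b]$ and differentiable on $]a,b[$.

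The key computation is the derivative of $v$. Using the product rule together with the identity for $\dot\mu$,
\[
\dot v(t)=\dot u(t)\,\mu(t)+u(t)\,\dot\mu(t)=\bigl(\dot u(t)-\beta(t)u(t)\bigr)\mu(t).
\]
The hypothesis $\dot u(t)\le\beta(t)u(t)$ forces the bracketed factor to be nonpositive, and since $\mu(t)>0$ this yields $\dot v(t)\le 0$ for every $t\in\,]a,b[$.

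From $\dot v\le 0$ on the open interval, together with the continuity of $v$ up to the left endpoint, I would conclude that $v$ is non-increasing on $[a,b]$: for any $t$ with $a<t\le b$ the mean value theorem applied on $]a,t[$ gives $v(t)-v(a)=\dot v(\xi)(t-a)\le 0$ for some $\xi\in\,]a,t[$, hence $v(t)\le v(a)=u(a)$. Unwinding the definition of $v$ then gives $u(t)\exp\left(-\int_a^t\beta(s)\,ds\right)\le u(a)$, and multiplying through by the positive quantity $\exp\left(\int_a^t\beta(s)\,ds\right)$ produces exactly the claimed bound $u(t)\le u(a)\exp\left(\int_a^t\beta(s)\,ds\right)$.

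The only step requiring a little care — the mild obstacle here — is the passage from the pointwise sign condition $\dot v\le 0$ on the \emph{open} interval to monotonicity on the \emph{closed} interval, since $v$ need not be differentiable at the endpoints; this is precisely what the mean value theorem on $]a,t[$ combined with one-sided continuity at $a$ resolves. Everything else is a routine rearrangement of the weight factors.
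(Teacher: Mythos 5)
Your integrating-factor argument is correct: the weight $\mu(t)=\exp\left(-\int_a^t\beta(s)\,ds\right)$, the computation $\dot v=(\dot u-\beta u)\mu\le 0$, and the mean value theorem on $]a,t[$ together with continuity at $a$ give exactly the stated bound, and you rightly flag the open-versus-closed interval point as the only delicate step. Note that the paper does not prove this lemma at all — it is stated in the appendix and cited from the literature — so there is no internal proof to compare against; your proof is the standard one that such a citation implicitly relies on. The only thing worth making explicit is a regularity hypothesis on $\beta$ (continuity on $]a,b[$ suffices), which the lemma statement leaves tacit but which your appeal to the fundamental theorem of calculus for $\dot\mu(t)=-\beta(t)\mu(t)$, and hence the differentiability of $v$ needed for the mean value theorem, genuinely requires.
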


	\bibliography{wave}
	\bibliographystyle{plain}	
\end{document}